\documentclass[a4paper,12pt]{article}
\usepackage{ucs} 
\usepackage[utf8x]{inputenc}
\usepackage{amsfonts} 
\usepackage{amsmath} 
\usepackage[pdftex]{graphicx} 
\usepackage{caption} 
\usepackage{subcaption} 
\usepackage{sidecap} 
\usepackage{enumerate}
\usepackage{setspace}
\usepackage[toc,page]{appendix}
\usepackage{color}

\hoffset = -20pt 
\marginparwidth = 0pt 
\oddsidemargin = -32pt 
\marginparsep = 0pt 
\voffset = 0pt 
\topmargin = 0pt 
\headheight = 0pt 
\headsep = 0pt 
\textheight = 700pt 
\footskip = 32pt 
\textwidth = 540pt 
\setstretch{1.4} 

\title{}
\author{}
\date{}

\pdfinfo{%
/Title ()
/Author ()
/Creator ()
/Producer ()
/Subject ()
/Keywords ()
}

\newtheorem{theorem}{Theorem}[section]
\newtheorem{lemma}[theorem]{Lemma}
\newtheorem{proposition}[theorem]{Proposition}
\newtheorem{corollary}[theorem]{Corollary}

\newenvironment{proof}[1][Proof]{\begin{trivlist}
\item[\hskip \labelsep {\bfseries #1}]}{\end{trivlist}}

\newenvironment{keywords}{\begin{@abssec}{\keywordsname}}{\end{@abssec}}
\newenvironment{@abssec}[1]{%
\if@twocolumn
\section*{#1}%
\else
\vspace{.05in}\footnotesize
\parindent .2in
{\upshape\bfseries #1. }\ignorespaces 
\fi}
{\if@twocolumn\else\par\vspace{.1in}\fi}
\newenvironment{AMS}{\begin{@abssec}{\AMSname}}{\end{@abssec}}

\makeatletter
\def\imod#1{\allowbreak\mkern10mu({\operator@font mod}\,\,#1)}
\makeatother

\newcommand{\qed}{\nobreak \ifvmode \relax \else
\ifdim\lastskip<1.5em \hskip-\lastskip
\hskip1.5em plus0em minus0.5em \fi \nobreak
\vrule height0.75em width0.5em depth0.25em\fi}


\newcommand{\Torus}{\mathbb{T}^2} 
\newcommand{\TorusD}{\mathbb{T}^d} 
\newcommand{\TorusP}[1]{\mathbb{T}^{#1}}
\newcommand{\EucP}[1]{\mathbb{R}^{#1}} 
\newcommand{\EucD}{\mathbb{R}^d} 

\newcommand{\Cone}{\mathcal{C}}
\newcommand{\upPhi}{\hat{\Phi}}
\newcommand{\lowPhi}{\Phi}
\newcommand{\vp}{v_m^L}
\newcommand{\proj}{\mbox{proj}}
\newcommand{\Tile}{P}
\newcommand{\Plane}{\mathcal{P}}

\newcommand\keywordsname{Key words}
\newcommand\AMSname{AMS subject classifications}

\begin{document}
\title{Indestructible dynamics of torus maps}
\author{Suddhasattwa Das\footnotemark[1], \and James A Yorke\footnotemark[3]}
\footnotetext[1]{Department of Mathematics, University of Maryland, College Park}
\footnotetext[3]{University of Maryland, College Park}
\date{\today}
\maketitle

\begin{abstract}
Given a $d$-dimensional torus map  $F(z)=Mz+G(z)\bmod 1$, where $M$ is an integer-matrix and and $G$ is a periodic function, we find conditions on $M$ under which $F$ is semi-conjugate to a linear torus map, independently of $G$. We also find a conditions $G$ under which these semi-conjugacies can be turned into conjugacies. These conditions are satisfied by open sets of torus maps (in the $C^1$-topology) and therefore describe some asymptotic behavior of trajectories which are stable under perturbations to the map.
\end{abstract}

\begin{keywords}Torus maps, torus, invariant expanding cones, skew-product\end{keywords}

\begin{AMS} 37B05, 37C15\end{AMS}

\section{Introduction}

Recall that the torus $\TorusD$ is the Cartesian product of $d$ topological circles $S^1\times\ldots\times S^1$, where $S^1$ is represented as the interval $[0,1]$ with the endpoints identified. Any continuous torus map $F:\TorusD\to\TorusD$ is of the form shown below.
\begin{equation}\label{eqn:periodic_part}
F(z \bmod 1)=Mz+G(z) \bmod{1},
\end{equation}
where $M$ is a $d\times d$ integer matrix (entries are integer-valued) that we call the \textbf{winding matrix} of $F$, and $G:\EucD\to\EucD$ is a bounded, 1-periodic function, called the \textbf{periodic part} of $F$.
Note that if for all $z\in\TorusD, |\det M(z)|=m>0$ and if $|\det dF|>0$, then $F$ is an $m$-to-$1$ of the torus, so every point has exactly $m$ pre-images under $F$. Any continuous torus map can be separated into a linear and periodic part as shown above, and is shown in \cite{Conjug2}, these are unique. 

\textbf{Note.} The winding matrix $M$ is also the map $F_*:H_1(\TorusD)\to H_1(\TorusD)$ on the first-homology group.

The following theorem gives some condition under which a torus map $F$ is semi-conjugate to a linear map on a lower dimensional torus, namely, $x\mapsto Ax \bmod 1$. \boldmath $SL(d,\mathbb{Z})$ \unboldmath denotes the set of $d\times d$ integer matrices with determinant $\pm 1$ (i.e., $SL(d,\mathbb{Z})$ ). We will use the following assumption.

\bigskip

\noindent \textbf{(A1)} There is some $S\in SL(d,\mathbb{Z})$ such that $S^{-1}MS$ is in block upper-triangular form, where $M$ is defined in Eq. \ref{eqn:periodic_part}. Then the top left $k\times k$ block will be denoted as \boldmath $A$ \unboldmath(where $k\leq d$). Let \boldmath $W$ \unboldmath be the subspace spanned by \{$e_1\ldots,e_k$\}, where \boldmath $e_i$ \unboldmath is the vector with all entries $0$ other than the $i$-th entry, which is $1$. 

\begin{theorem}[A semi-conjugacy theorem]\label{thm:factor_Phi_matrix}
Let $F:\TorusD\to\TorusD$ be a $d$-dimensional continuous torus map of the form (\ref{eqn:periodic_part}). Assume (A1). If either of the two conditions hold,
\\(i) $F$ is invertible and $A$ is a hyperbolic matrix; or
\\(ii) $A$ is an expanding matrix.
\\then $F$ is semi-conjugate to the following map on $\TorusP{k}$.
\begin{equation}\label{eqn:expnd_torus}
\theta_{n+1}=A\theta_n\mod 1.
\end{equation}
In other words, there is an onto map $\lowPhi:\TorusD\to\mathbb{T}^k$ such that $\lowPhi F(z))=A\lowPhi(z) \bmod 1$. Moreover, in case (ii) is satisfied, then for every $\theta\in \TorusP{k}$, $\lowPhi^{-1}(\theta)$ intersects every $k$-dimensional sub-torus of $\TorusD$ parallel to $\mathcal{F}$. In particular, $\lowPhi$ is onto.
\end{theorem}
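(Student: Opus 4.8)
The plan is to build $\lowPhi$ in two stages: first linearize the "block" structure using $S$, then construct a genuine semi-conjugacy onto the $A$-dynamics via a limiting/averaging argument. After conjugating $F$ by the toral automorphism induced by $S$, we may assume $M$ itself is block upper-triangular with top-left block $A$ acting on $W=\mathrm{span}(e_1,\dots,e_k)$. Write coordinates as $z=(x,y)$ with $x\in\R^k$, $y\in\R^{d-k}$; then the $x$-component of $F$ has the form $x\mapsto Ax + By + g(x,y)\bmod 1$ for a bounded periodic $g$ (and some integer block $B$). Let $\pi:\TorusD\to\TorusP{k}$ be the natural projection $(x,y)\mapsto x\bmod 1$. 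The map $\pi$ is \emph{not} a semi-conjugacy because of the $By+g$ terms, so the goal is to find $h:\TorusD\to\TorusP{k}$ with $h$ homotopic to $\pi$ such that $\lowPhi:=h$ satisfies $\lowPhi\circ F = A\cdot\lowPhi\bmod 1$.

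I would look for $\lowPhi$ in the form $\lowPhi(x,y)=x+u(x,y)\bmod 1$ where $u:\TorusD\to\R^k$ is a bounded continuous correction term, so the semi-conjugacy equation becomes the cohomological-type equation $A\,u(z) - u(F(z)) = By+g(x,y)\bmod 1$, i.e. $u$ must solve $u(F(z)) = A\,u(z) - \eta(z)$ for the known bounded $\eta(z):=By+g(x,y)$. In case (ii), $A$ expanding means $A^{-1}$ is a contraction on $\R^k$, so one can solve by the absolutely convergent series $u(z) = -\sum_{n\ge 1} A^{-n}\,\eta(F^{n-1}(z))$ (telescoping check: this is the unique bounded solution, and boundedness follows from $\|A^{-n}\|\to 0$ geometrically and boundedness of $\eta$). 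In case (i), $F$ is invertible and $A$ is hyperbolic, so split $\R^k = E^s\oplus E^u$ for $A$; solve the $E^u$-part forward as above and the $E^s$-part backward using $F^{-1}$ and the series $\sum_{n\ge 0} (A|_{E^s})^{n}\,\eta_s(F^{-n}(z))$. In both cases $u$ is continuous (uniform limit of continuous functions on the compact torus), bounded, and $z\mapsto x+u(z)$ descends to a continuous map $\TorusD\to\TorusP{k}$ because $u$ is periodic.

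**The fiber–intersection property and surjectivity (case (ii))**

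For the "moreover" part, fix $\theta\in\TorusP{k}$ and a $k$-dimensional subtorus $\mathcal{T}$ parallel to $\mathcal{F}$ (the subtorus corresponding to $W$); concretely $\mathcal{T}=\{(x,y_0)\bmod 1: x\in\TorusP{k}\}$ for fixed $y_0$. On $\mathcal{T}$ the map $\lowPhi$ restricts to $x\mapsto x+u(x,y_0)\bmod 1$, which is a continuous self-map of $\TorusP{k}$ homotopic to the identity (since $u$ is bounded, the straight-line homotopy $t\mapsto x+tu(x,y_0)$ works), hence has degree $1$ and is therefore surjective onto $\TorusP{k}$. Thus $\lowPhi^{-1}(\theta)\cap\mathcal{T}\neq\emptyset$, which is exactly the claim, and surjectivity of $\lowPhi$ itself follows immediately. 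The main obstacle I anticipate is purely bookkeeping: verifying that the formal correction term $u$ is genuinely well-defined mod $1$ (the series lives in $\R^k$ but $\eta$ is only defined mod $1$, so one must lift carefully along trajectories), and checking continuity uniformly — but the geometric convergence from the expanding/hyperbolic spectrum of $A$ makes this routine once set up. The degree argument for surjectivity is standard and should be the easy part.
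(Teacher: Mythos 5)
Your construction is essentially the paper's: after normalizing by $S$, both you and the paper define the semi-conjugacy as the first $k$ coordinates plus a geometrically convergent correction series (forward iterates for the expanding directions, backward iterates via $F^{-1}$ for the contracting ones in case (i)), and both check $\mathbb{Z}^d$-periodicity of the correction so that it descends to $\TorusP{k}$. Your surjectivity argument is in fact cleaner than the paper's: you restrict to a $k$-subtorus parallel to $\mathcal{F}$, note that the restriction is homotopic to the identity, and conclude it has degree one; the paper instead argues from the $C^0$ bound $|\upPhi-\proj_W|\le \|G\|_0/(\|A\|-1)$ via a rather loose ``bounded image'' contradiction.

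The step you defer as ``routine bookkeeping'' is, however, the substantive one, and as written it fails. Block upper-triangularity gives $\proj_W(Mz)=Ax+By$, not $Ax$, so your $\eta(z)=By+g(x,y)$ is \emph{not} bounded on the universal cover: the $y$-components of lifted orbits grow like powers of the lower-right block $C$. Concretely, for $M=\begin{pmatrix}2&1\\0&2\end{pmatrix}$ and $G\equiv 0$ one gets $\eta(F^{n-1}z)=2^{n-1}y$, so $\sum_{n\ge1}A^{-n}\eta(F^{n-1}z)=\sum_{n\ge1}y/2$ diverges and no bounded correction $u$ exists. The series converges for arbitrary bounded $G$ exactly when the cross term is absent, i.e.\ when coordinates are chosen so that the first $k$ rows of $M$ are $(A\ \ 0)$, equivalently $\proj_W M=A\proj_W$; this is what the paper's left-eigenvector/tiling construction arranges for Corollary~\ref{corr:factor_Phi}, and what the paper's own expansion Eq.~\ref{eqn:Phi_series} tacitly assumes by dropping the $B\,\proj_{W^\perp}$ term. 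You need either that normalization or a spectral-gap hypothesis between $A$ and $C$; ``lifting carefully along trajectories'' does not repair the gap, since redefining $\eta$ on a fundamental domain introduces jumps $A^{-n}B\vec m$ that are not integer vectors, so $x+u\bmod 1$ would not be continuous.
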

Theorem \ref{thm:factor_Phi_matrix} will be proved in Section \ref{subsect:Phi_matrix}. 

\textbf{Remark - the 1-D case.} Consider the case $d=1$ and $F:S^1\to S^1$ is a degree-$m$ circle map with an attracting fixed point $P$. Then $F$ is of the form $F(z)=mz+G(z)\bmod 1$ for some $m>1$, and by Theorem \ref{thm:factor_Phi_matrix}, it is semi-conjugate to the expanding circle map below, which has no attractor. The semi-conjugacy is given by $\lowPhi(z\bmod 1)=\underset{n\to\infty}{\lim}m^{-n}F^n(z) \bmod 1$.
\begin{equation}\label{eqn:expnd_circ}
x_{n+1}=mx_n\mod 1.
\end{equation}
The basin of attraction of $P$ is a countable union of open intervals and each such interval must be mapped by $\lowPhi$ into points that eventually map into $\lowPhi(P)$ by the map in Eq. \ref{eqn:expnd_circ}. If the basin of $P$ is dense, then its complement is a Cantor set which is mapped onto $S^1$ by $\lowPhi$. Hence, this semi-conjugacy can lose some information about the dynamics while preserving other information. 

\textbf{Remark.} Note that if $k=d=1$ and $F$ is an expanding map, then the semi-conjugacy is a conjugacy, a fact that also follows from a theorem by Shub \cite{ExpndngEndo}.

\textbf{Remark.} The assumption on the matrix $M$ being conjugate to an integer-valued block upper-triangular matrix cannot be replaced by the assumption that there are eigenvalues not equal to one in magnitude. Any monic, integer polynomial is the characteristic polynomial of some integer-valued matrix and the polynomial be  irreducible over the ring of integers but have eigenvalues not equal to one in magnitude. An example is the Lehmer polynomial \cite{Lehmer_polyn} $x^10+x^9-x^7-x^6-x^5-x^4-x^3+x+1$. It is known to be irreducible but having two roots different from one in magnitude.

\begin{corollary}[Semiconjugacy to an expanding circle map]\label{corr:factor_Phi}
Let $F$ be  a $d$-dimensional continuous torus map whose winding matrix $M$ has an integer eigenvalue $m$. Then $F$ is semi-conjugate to the map in Eq. \ref{eqn:expnd_circ}, i.e., there is a map $\lowPhi:\TorusD\to\TorusD$ such that $\lowPhi F(z))=m\lowPhi(z) \bmod 1$.
\end{corollary}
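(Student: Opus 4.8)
The plan is to deduce this directly from Theorem~\ref{thm:factor_Phi_matrix}, used with $k=1$. Here $m$ is an integer eigenvalue with $|m|\ge 2$, since (\ref{eqn:expnd_circ}) is to be an expanding circle map; for $|m|\le 1$ the target is not expanding and the statement can even fail (an irrational rotation, which has $m=1$, is not semi-conjugate \emph{onto} $S^1$ by the identity). So the task is to manufacture, from an integer eigenvalue $m$ of $M$, a matrix $S\in SL(d,\mathbb{Z})$ witnessing (A1) with top-left block $A=(m)$, and then check that condition (ii) of Theorem~\ref{thm:factor_Phi_matrix} holds.

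First I would produce a \emph{primitive} integer eigenvector. Since $m$ is an eigenvalue, the integer matrix $M-mI$ is singular, so its kernel contains a nonzero rational vector; clearing denominators and dividing by the greatest common divisor of the entries yields a primitive $v\in\mathbb{Z}^d$ with $Mv=mv$. Next I would invoke the classical fact that a primitive integer vector extends to a $\mathbb{Z}$-basis of $\mathbb{Z}^d$ (for instance via the Smith normal form of the row $v^{T}$, or by induction on $d$): there is $S_0\in GL(d,\mathbb{Z})$ whose first column is $v$, and replacing the last column of $S_0$ by its negative when $\det S_0=-1$ gives $S\in SL(d,\mathbb{Z})$ with $Se_1=v$.

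Then the block form is immediate: $S^{-1}MS\,e_1=S^{-1}Mv=mS^{-1}v=me_1$, so the first column of $S^{-1}MS$ is $me_1$, i.e.
\[
S^{-1}MS=\begin{pmatrix} m & * \\ 0 & M'\end{pmatrix}
\]
is block upper-triangular with $k=1$ and top-left block $A=(m)$. This is exactly assumption (A1). Since $|m|\ge 2$, the $1\times 1$ matrix $A=(m)$ is expanding, so condition (ii) of Theorem~\ref{thm:factor_Phi_matrix} is satisfied, and the theorem furnishes an onto map $\lowPhi:\TorusD\to\TorusP{1}=S^1$ with $\lowPhi(F(z))=A\lowPhi(z)=m\lowPhi(z)\bmod 1$, which is precisely (\ref{eqn:expnd_circ}).

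The argument is essentially formal once Theorem~\ref{thm:factor_Phi_matrix} is in hand; the only genuinely non-routine ingredient is the linear-algebra step that a primitive integer vector can be completed to a unimodular basis, and the only point requiring a little care is the bookkeeping on $|m|\ge 2$ versus $|m|\le 1$ (for $F$ invertible, the rational-root theorem forces any integer eigenvalue to divide $\det M=\pm1$, so invertible examples only produce the degenerate values $m=\pm1$, and the substantive content of the corollary lies entirely in case (ii)).
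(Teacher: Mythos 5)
Your proof is correct, and it reaches the conclusion by a genuinely different construction of the matrix $S$ than the paper uses. The paper works with a \emph{left} integer eigenvector $v^L_m$ of $M$: it builds a unit-volume lattice tiling by choosing $d-1$ integer vectors perpendicular to $v^L_m$ plus one transversal lattice vector (unit volume being verified via Pick's formula), and the semi-conjugacy is morally the linear functional $z\mapsto v^L_m\cdot z$, which satisfies $v^L_m\cdot Mz=m\,v^L_m\cdot z$ and descends to the circle because $v^L_m$ is an integer covector. You instead take a primitive \emph{right} integer eigenvector $v$ with $Mv=mv$ and complete it to a unimodular basis, so that $S^{-1}MS$ has first column $me_1$. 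These are dual constructions, but yours is the one that literally verifies hypothesis (A1) as stated: the theorem's proof requires $W=\mathrm{span}(e_1,\dots,e_k)$ to be $M$-invariant in the new coordinates with $M|W=A$, and for $k=1$ that forces $Se_1$ to be a right eigenvector of $M$; the paper's tiling, taken at face value (columns $w_1,\dots,w_{d-1}\perp v^L_m$ plus $w_d$), instead places $m$ in the bottom-right block, or, after reordering, yields a block \emph{lower}-triangular form, so some transposition bookkeeping is being elided there. Your approach trades the geometric tiling/Pick's-formula argument for the standard algebraic fact that a primitive integer vector extends to a $\mathbb{Z}$-basis, which is cleaner and matches the theorem's hypotheses exactly. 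You are also right to flag the restriction $|m|\geq 2$: the corollary's statement omits it, but the paper's own proof assumes $m>1$, and without it condition (ii) of Theorem \ref{thm:factor_Phi_matrix} is unavailable (and, as you note, the invertible case (i) contributes nothing here since an integer eigenvalue of a unimodular matrix must be $\pm 1$).
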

\begin{proof} If the winding matrix $M$ has a left eigenvector $v^L_m$ with integer eigen value $m>1$, then this vector can be used to construct a unit volume tiling of $\EucD$ which we describe below. This tiling can be used to contruct a matrix $S\in SL(d,\mathbb{Z})$ from Theorem \ref{thm:factor_Phi_matrix} such that $k=1$, and $A\equiv m$, and Corollary \ref{corr:factor_Phi} is proved. \qed
\end{proof}

\textbf{A tiling of the torus.} Given the integer vector $\vp \in\mathbb{R}^d$, we can find $d$ linearly independent integer vectors $w_1,\ldots,w_{d}$ which satisfy the following.
\\(i) For each $i=1\ldots,d-1$, $w_i$ is perpendicular to $\vp$.
\\(ii) The volume of the parallelotope \boldmath $\Tile$ \unboldmath formed by \{$w_1,\ldots,w_{d}$\} have unit volume.
\\(iii) $\phi(z)=0$ on the side of $\Tile$ which contains $w_1,\ldots,w_{d-1}$, and is $1$ on the opposite face of $\Tile$.

To see this, first let $\Plane$ be the $(d-1)$-hyperplane of vectors perpendicular to $\vp$. Since $v^L_m$ is integer vector, there are $d-1$ linearly independent integer vectors perpendicular to $v^L_m$. So we can pick $d-1$ linearly independent integer vectors $w_1,\ldots,w_{d-1}$ in $\Plane$ such that the $(d-1)$-dimensional parallelotope described by $w_1,\ldots,w_{d-1}$ has no lattice point in its interior or in the interior of any of its faces. There are two lattice points away from $\Plane$ which are closest to $\Tile$. Let $w_d$ be the one among them for which $v^L_m\cdot w_d>0$. Let $\Tile$ be the parallelotope described by \{$w_1,\ldots,w_{d}$\}. 

Now note that the $\Tile$ has no lattice point in its interior or in the interior of any of its faces. Therefore, by the $d$-dimensional Pick's formula, $\Tile$ has volume 1. Therefore, $\Tile$ forms a periodic tiling of $\mathbb{R}^d$ and $\Tile\mod Z\cong \TorusD$.

See Fig. \ref{fig:plane_tiling} for an example when $d=2$, $k=1$.

\begin{figure} 
\includegraphics[scale=0.2]{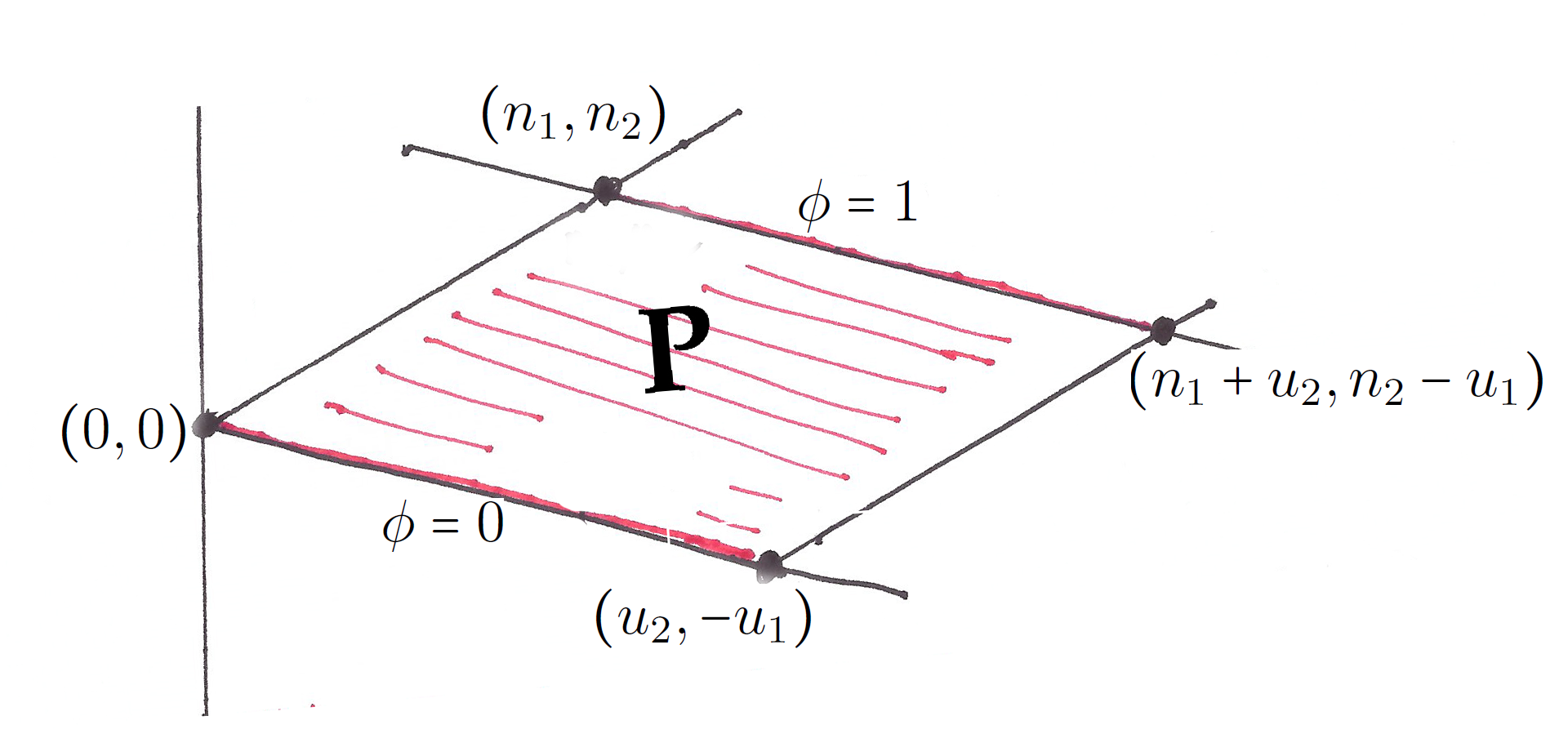}
\caption{\textbf{A lift $P$ of $\Torus$.} Let 
$(u_1, u_2) := v_m^L$ be the left eigenvector of the winding matrix $M$ for which the integers $u_1$ and $u_2$ are chosen as small as possible; 
that is, they are relatively prime; that is, we can choose integers $n_1$ and $n_2$ for which $n_1 u_1 + n_2 u_2 = 1$. 
The parallelogram $P$ shown has area $1$. 
The map $(x,y)\mapsto (x\mod1,y\mod1)$ takes the parallelogram $P$ onto the torus in a one-to-one manner -- except on the boundary of the parallelogram.
The vector $v_1^R:=(u_2,-u_1)$ is a right eigenvector of $M$ and 
has eigenvalue $1$.
Define that $\phi(z):=\vp\cdot z$. 
Then $\phi$ is $0$ on the line containing $(0,0)$ and $(u_2,-u_1)$ and is $1$ on the parallel line containing $(n_1,n_2)$, and $\phi$ is constant on every line parallel to these.
}
\label{fig:plane_tiling}
\end{figure}

\textbf{Skew-product maps.} An important class of maps which have many interesting topological and measure-theoretic properties and which can be factored as in Corollary \ref{corr:factor_Phi} are ``skew product'' maps. We are interested in skew-product maps of the form
\begin{equation}\label{eqn:map}
F(x,y) = (Ax,\ F_y(x,y))\ \mbox{mod 1 (in each coordinate)}.
\end{equation}
where $x\in \TorusP{k}$, $y\in \TorusP{d-k}$, $A$ is a $k\times k$ expanding, integer-matrix, and $F_y:\TorusP{d-k}\to\TorusP{d-k}$ is continuous. Note that the map in Eq. \ref{eqn:map} is semi-conjugate to the linear torus map in Eq. \ref{eqn:expnd_torus}.

See \cite{SkewProd5} for a nice overview of skew-product maps and their treatment as random maps or non-autonomous differential equations. In \cite{SkewProd2}, Kleptsyn and Nalskii, using the viewpoint of stochastic circle diffeomorphisms, prove the important result that under certain conditions, the orbits of almost every (with respect to a certain measure) pair of initial conditions on a fiber are asymptotic. Homburg \cite{SkewProd1} looks at skew products with expanding circle maps and prove the occurrence of topological mixing for an open set of maps. The authors of \cite{Kostelich} studied the mechanism by which chaos occurs on a certain class of skew-product maps. Ilyashenko and Negut \cite{InvsblAttrctr} constructed a family of structurally stable skew product maps over the Smale-Williams horseshoe in which the attract has arbitrarily low information dimension. Other features, like the measure of the non-wandering set, and perturbation of skew product systems, has been in investigated in \cite{SkewProd4} and \cite{SkewProd3} respectively. 

Recall that two dynamical systems are said to be \textbf{conjugate} if there is a change of coordinates that is continuous and with a continuous inverse, transforming one dynamical system to the other. This change of variables is called a \textbf{conjugacy}.  We will state and prove two conjugacy results, Theorem \ref{theo:conj_exp_cone} and Corollary \ref{corr:conj_exp_cone}, later in Section \ref{sect:conjg}, which provide sufficient conditions under which a torus map is conjugate to a skew-product map. 

\section{Proof of Theorem \ref{thm:factor_Phi_matrix}}\label{subsect:Phi_matrix}

By assumption, there is a matrix $S\in SL(d,\mathbb{Z})$ such that $SMS^{-1}$ is in block upper-triangular form. Such a matrix $S$ corresponds to a change of coordinates in $\TorusD$, so we will assume that we are working in these coordinates and $M$ is in the desired form. Then note that $W$ is an invariant subspace of $M$ and $M|W=A$. Let \boldmath $\mathcal{F}$ \unboldmath denote the face of the torus spanned by \{$e_1\ldots,e_k$\}. We will first prove the case when $A$ is hyperbolic and $F$ invertible, and make some minor modifications to the proof to prove the case when $A$ is expanding and $F$ not necessarily invertible.

\subsection{The case when $A$ is hyperbolic.}

Since $A$ is hyperbolic, the subspace $W$ splits into two complementary, invariant subspaces $W=W^u\oplus W^s$ such that $A_u:=A|W^u$ and $A_s:=A|W^s$ are expanding and contracting respectively. Let the dimension of these spaces be $k^u$ and $k^s$ respectively. then note that $k=k_u+k_s$. $A_u$ and $A_s$ can be viewed as square matrices of dimension $k_u, k_s$ respectively.Define
\\$\upPhi_u:\EucD\to W^u$; $\upPhi_u(z):=\underset{n\to\infty}{\lim}A_u^{-n}\proj_{W^u}F^{n}(z)$.
\\$\upPhi_u:\EucD\to W^s$; $\upPhi_s(z):=\underset{n\to\infty}{\lim}A_s^{n}\proj_{W^s}F^{-n}(z)$.

\textbf{Claim A.} $\upPhi_u$ and $\upPhi_s$ are well defined and continuous. We will only prove the claim for $\upPhi_u$, as the proof for $\upPhi_s$ is analogous. To prove that the limit exists, we will use the following equation which follows from the definition of $\upPhi_u$, in which we express $\upPhi_u$ as an infinite series.
\begin{equation}\label{eqn:Phi_series}
\upPhi_u(z)= \proj_{W^u} z+\underset{k=1,2,\ldots,}{\Sigma}A_u^{-k}\proj_{W^u} G\circ F^{k-1}(z)
\end{equation}
Since $G$ is $1$-periodic, it is uniformly bounded. Since $\|A_u^{-1}\|<1$ and each of the terms $\proj_{W^u} G\circ F^{k-1}(z)$ are bounded, the limit $\upPhi_u(z)$ exists as an uniform limit, by the Weierstrass M-test (see \cite{Rudin1}). Since each finite sum is a continuous function, by the Uniform Limit Theorem (see \cite{Rudin1}), $\upPhi_u(z)$ is also continuous.

\textbf{Claim B.} $\upPhi_u$ is a semi-conjugacy. Note that $\upPhi_u\circ \hat{F}(z)=\underset{n\rightarrow\infty}{\lim}A_u^{-n}\proj_{W^u}F^{n+1}(z) = A_u\underset{n\rightarrow\infty}{\lim}A_u^{-n-1}\proj_{W^u}F^{n+1}(z)= A_u\upPhi_u(z)$. A similar result holds for $\upPhi_s$. Therefore, we have proved the following.
\begin{equation}\label{eqn:factoring_phi}
\mbox{For every } z\in\EucD, \upPhi_u\circ\hat{F}(z)=A_u \upPhi_u(z),\ \ \upPhi_s\circ\hat{F}(z)=A_s \upPhi_u(z).
\end{equation}

Let \boldmath $\upPhi$ \unboldmath $:\EucD\to \mathbb{R}^k$ be the map $\upPhi(z)=\upPhi_u(z)\oplus \upPhi_s(z)$. Then note that
\[\upPhi(F(z))=A_u \upPhi_u(z)+A_s \upPhi_s(z)=A\upPhi(z).\]

Secondly, let $\vec{m}\in\mathbb{Z}^d$. Then : $F(z+\vec{m})=M(z+\vec{m})+G(z+\vec{m})=F(z)+M\vec{m}$. Applying this formula $n$ times gives : $F^{n}(z+\vec{m})=F^{n}(z)+M^n\vec{m}.$

Similar to Eq. \ref{eqn:Phi_series}, we can write $\upPhi(z)$ as the infinite sum
\[\upPhi(z)=\proj_W z + \underset{n=1,2,\ldots}{\Sigma}[A_u^{-n}\proj_{W^u}G(F^{n-1}z)\oplus A_s^{n}\proj_{W^s}G(F^{n-1}z)]\]
\begin{equation}\begin{split}\label{eqn:up_phi_periodic}
\upPhi(z+\vec{m}) &=\proj_W (z+\vec{m}) + \underset{n=1,2,\ldots}{\Sigma}[A_u^{-n}\proj_{W^u}G(F^{n-1}(z+\vec{m}))\oplus A_s^{n}\proj_{W^s}G(F^{n-1}(z+\vec{m}))]\\
&= \proj_W z + \underset{n=1,2,\ldots}{\Sigma}[A_u^{-n}\proj_{W^u}G(F^{n-1}(z+\vec{m}))\oplus A_s^{n}\proj_{W^s}G(F^{n-1}(z+\vec{m}))] + \proj_W \vec{m}\\
&=\upPhi(z)+\proj_W \vec{m}
\end{split}\end{equation}
Note that $\proj_W \vec{m}\in\mathbb{Z}^k$. Therefore the map $\lowPhi:\TorusD\to\mathbb{T}^k$ defined as
\[\lowPhi(z\bmod 1):=\upPhi(z)\bmod 1\]
is well defined. The map $\lowPhi$ is the desired semi-conjugacy map. This completes the proof of the of Theorem \ref{thm:factor_Phi_matrix} for the hyperbolic case. \qed

\subsection{The case when $A$ is expanding.}

Let $\proj_W$ be the orthogonal projection onto the subspace $W$. Then
\[\upPhi(z):=\underset{n\to\infty}{\lim} A^{-n} \proj_W F^{n}(z)\]
In a manner similar to the hyperbolic case, it can be shown that $\upPhi$ is well defined, continuous and factors into a map $\lowPhi:\TorusD\to\mathbb{T}^k$ which serves as the semi-conjugacy. 

To prove that $\lowPhi$ is onto, it is equivalent to prove the analogous statement for $\upPhi$, namely

\textbf{Claim C.} For every $x\in \EucP{k}$, $\upPhi^{-1}(x)$ intersects every $k$-sub-plane of $\TorusD$ parallel to $W$. In particular, $\upPhi$ is onto.

The following inequalities follows from the definition of $\upPhi$ and will be important for making conclusions about the fibers of $\upPhi$. Let $\|G\|_0$ denote the $C^0$-norm of $G$, defined as $\underset{z\in\EucD}{\sup}\|G(z)\|$. 
\begin{equation}\label{eqn:Phi-phi_bound}
\mbox{For every } z\in\EucD,\ \ |\upPhi(z)-\proj_W(z)|\leq\frac{1}{\|A\|-1}\|\|\|G\|_0
\end{equation}
\begin{equation}\label{eqn:ReverseLip_Phi}
\mbox{For every } z_1, z_2\in\EucD,\ \left||\upPhi(z_1)-\upPhi(z_2)|-|\proj_W(z_1)-\proj_W(z_2)|\right|\leq \frac{2}{\|A\|-1}\|G\|_0
\end{equation}

\textbf{Proof of Claim C.} Let the contrary to this statement be true. Let $S$ be a $k$-sub-plane of $\TorusD$ parallel to $W$. If $\upPhi^{-1}(x)$ does not intersect $S$, it means that the image $\upPhi(S)$ is bounded in some direction. However, the image $\proj_W(S)$ is $\EucP{k}$. This leads to a contradiction of Ineq. \ref{eqn:Phi-phi_bound}. \qed

\section{When torus maps are conjugate to skew-products.}\label{sect:conjg}

\subsection{Cone structures.}\label{subsect:cones}


\textbf{Invariant, expanding cones.} 
The property of ``dominating expansion'' can be defined on any manifold, but we will stick to $\TorusD$. Let $e(z)$ be a smooth $k$-sub-bundle on $\TorusD$, i.e., $e(z)$ is a $k$-dimensional vector sub-space of the tangent space at the poiont $z$. Let $e^\perp(z)$ be the orthogonal complement of $e(z)$. A tangent vector $v$ at a point $z\in\TorusD$ can be uniquely represented as $v=(a,b)_e$, where $a\in e(z)$, $b\in e^\perp(z)$. Let $v'=dF(z)v$ and let the representation of $v'$ in terms of the vector spaces $e(F(z)),e^\perp(F(z))$ be $(a',b')_e$. We say that there an \textbf{expanding cone structure} centered around $e$ if there are constants $K>1$ and $\alpha>0$ such that for every point $z$, if $|b|\leq\alpha|a|$, then,\\
\noindent
(i) $|b'|\leq\alpha|a'|$, and\\
(ii) $|a'|\geq K|a|$.\\
We can rephrase that as follows. At every point $z\in\TorusD$, the \boldmath $\alpha$-\textbf{cone}\unboldmath, denoted $\Cone_\alpha(z)$ is the set of vectors $v$ in the tangent space at $z$ such that $\|\proj_{e^\perp}v\|\leq\alpha\|\proj_{e}v\|$. This cone-structure is said to be invariant, expanding under $F$ if if for some $\alpha>0$ and some $K >1$, $\Cone_\alpha(F(z))\subset DF(z)(\Cone_\alpha(z))$ and if $(a',b')_e=DF(z)(a,b)_e$, then $|a'|>K||a|$. See Fig. \ref{fig:inv_cone} for a schematic diagram.

The cone structure is said to be \textbf{dominated} if for each non-zero vector $v\in e^\perp(z)$, $\|DF(z)v\|<K\|v\|$. In other words, the expansion is the strongest for vectors within the cone.

Invariant cone systems are in particular, present in hyperbolic systems, as also in various weaker forms of hyperbolicity like dominated cones \cite{SeminalDominated} and dominated splittings \cite{DomSplit}. Note that in non-hyperbolic systems, the tangent subspaces $e$ and $e^\perp$ are not invariant. Most of techniques used to prove properties in these different versions of hyperbolicity cannot be extended to the broader class of maps we are interested in, like maps which are either not diffeomorphisms or without a continuous invariant splitting of the tangent space.

\begin{figure}
\includegraphics[width=0.5\textwidth]{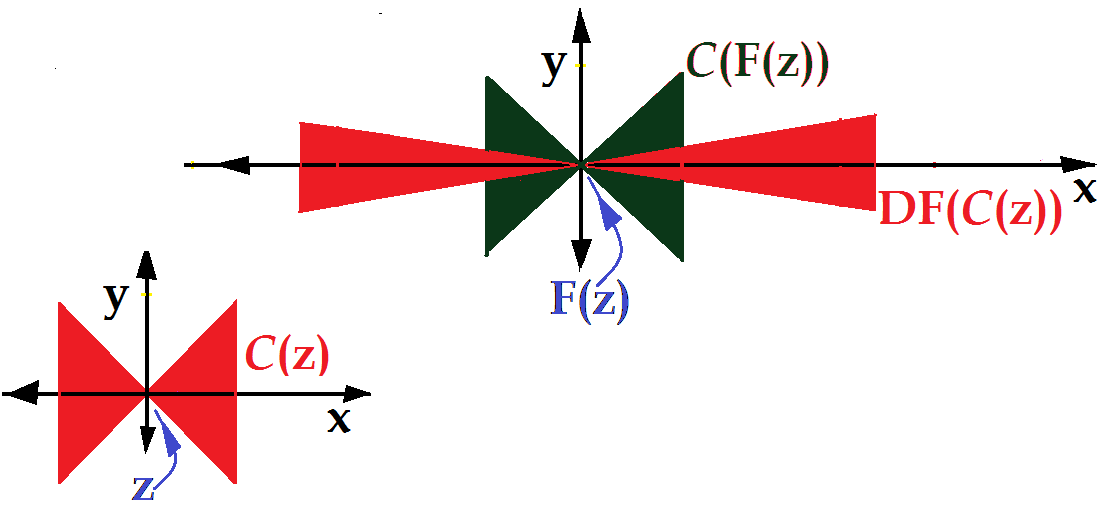}
\caption[Invariant, expanding cones.]{\textbf{Invariant, expanding cones.} This figure illustrates an invariant, expanding cone structure for a torus map $F:\Torus\to\Torus$. For the sake of simplicity, $e^1$ and $e^2$ have been taken to be the unit vector fields along the X and Y directions of the torus respectively, and $\alpha=1$. The triangles drawn in red and green lie in the tangent spaces at $z$ and $F(z)$ respectively, and pictorially represent part of the cones containing the vectors $\{(u,v)\ |\ \|v\|\leq \|u\|\leq 1\}$ in their respective spaces. Any vector within the red cone $\Cone(z)$ at $z$ is mapped into the green cone $\Cone(F(z))$ at $F(z)$ under the action of $DF(z)$ and also stretched by a factor of at least $K>1$.}
\label{fig:inv_cone}
\end{figure}

\textbf{Definition [conic curve].} A differentiable curve in $\EucD$ or in $\TorusD$ is said to be a {\bf conic curve} if its tangent vector at every point lies inside the expanding cone at that point on the manifold. Note that the image of a conic-curve under the map is again a conic curve, with an expansion in length by a factor of at least $K$. Also, since $W$ lies inside the expanding cone by assumption (A2), there is a uniform constant $\tau>0$ such that for any conic curve of length $l$ joining two points $A$ and $B$, 
\begin{equation}\label{eqn:length_proj}
|\proj_W (A-B)|\geq\tau l.
\end{equation}

Theorem \ref{theo:conj_exp_cone} below establishes some easily verifiable and satisfiable conditions under which a torus map is conjugate to $F_0$ in (\ref{eqn:map}). We will assume the following on $F$.
\\(A2) there is an invariant, expanding cone-structure centered around the vectors \{$e_1\ldots,e_k$\}.
\\(A3) $DF$ is invertible. Such a map is called \textbf{local diffeomorphism}, i.e., every point has a neighborhood in which the map is a diffeomorphism. 
\\(A4) there is a dominated, invariant, expanding cone-structure centered around the vectors \{$e_1\ldots,e_k$\}.

\begin{theorem}\label{theo:conj_exp_cone}
Let $F:\TorusD\to\TorusD$ be a $C^1$ map that satisfies assumptions (A1)-(A3). Then if $A$ is an expanding matrix, then $F$ is conjugate to a skew-product map of the form (\ref{eqn:map}). Moreover, if (A4) is satisfied, then the conjugacy map $H$ is differentiable along the Y-direction.
\end{theorem}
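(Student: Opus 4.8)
The plan is to use the semi-conjugacy $\lowPhi$ supplied by Theorem \ref{thm:factor_Phi_matrix}(ii) as the $x$-component of the conjugacy, and the vertical coordinate as the $y$-component. First I would pass to the $S$-coordinates of assumption (A1), so that $M$ is block upper-triangular, $W=\mathrm{span}\{e_1,\dots,e_k\}$, and points of $\TorusD$ are written $(x,y)$ with $x\in\TorusP{k}$, $y\in\TorusP{d-k}$; then $F(x,y)=(\pi_1F(x,y),\pi_2F(x,y))$ with $\pi_1F(x,y)=Ax+(\cdots)\bmod1$. Since $A$ is expanding, Theorem \ref{thm:factor_Phi_matrix}(ii) supplies the continuous map $\lowPhi(z\bmod1)=\upPhi(z)\bmod1$ with $\upPhi(z)=\lim_n A^{-n}\proj_W\hat F^n(z)$, satisfying $\upPhi\circ\hat F=A\upPhi$ on $\EucD$ (hence $\upPhi\circ\hat F^n=A^n\upPhi$) and the bounds (\ref{eqn:Phi-phi_bound}) and (\ref{eqn:ReverseLip_Phi}). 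The candidate conjugacy is
\[
H(x,y):=(\lowPhi(x,y),\,y).
\]
The key observation is that \emph{if} $H$ is a homeomorphism, then it automatically conjugates $F$ to a skew-product of the form (\ref{eqn:map}): the first coordinate of $H\circ F\circ H^{-1}$ is $\theta\mapsto A\theta\bmod1$ by the semi-conjugacy relation, and because $H^{-1}$ preserves the $y$-coordinate, $\pi_2F$ becomes a continuous function of the new coordinates $(\theta,y)$ alone, which is the required $F_y$.

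So the whole argument reduces to showing that \emph{for each fixed $y$ the map $x\mapsto\lowPhi(x,y)$ is a homeomorphism of $\TorusP{k}$}; this is the one place assumption (A2) is really used. Fix a lift $\tilde y$ and suppose $\upPhi(x_1,\tilde y)=\upPhi(x_2,\tilde y)$ with $x_1\ne x_2$ in $\EucP{k}$. The straight segment joining $(x_1,\tilde y)$ and $(x_2,\tilde y)$ has tangent vector in $W$, hence lies inside the expanding cone, so it is a conic curve of length $|x_1-x_2|$; as noted after the definition of a conic curve, $\hat F^n$ maps it to a conic curve of length $\ge K^n|x_1-x_2|$ joining $\hat F^n(x_1,\tilde y)$ and $\hat F^n(x_2,\tilde y)$, and then (\ref{eqn:length_proj}) gives
\[
\bigl|\proj_W\hat F^n(x_1,\tilde y)-\proj_W\hat F^n(x_2,\tilde y)\bigr|\ \ge\ \tau K^n|x_1-x_2|.
\]
But $\upPhi\circ\hat F^n=A^n\upPhi$ forces $\upPhi(\hat F^n(x_1,\tilde y))=\upPhi(\hat F^n(x_2,\tilde y))$, so (\ref{eqn:ReverseLip_Phi}) bounds the left side above by the constant $\frac{2}{\|A\|-1}\|G\|_0$; since $K>1$ this is impossible unless $x_1=x_2$. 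Hence $x\mapsto\upPhi(x,\tilde y)$ is injective on $\EucP{k}$, and by the periodicity identity (\ref{eqn:up_phi_periodic}) (applied with $\vec m=(\vec n,0)$, $\vec n\in\mathbb Z^k$) it descends to a continuous self-map $\psi_y$ of $\TorusP{k}$ which, by the same identity, is injective. A continuous injective self-map of the closed connected manifold $\TorusP{k}$ is onto (invariance of domain), hence a homeomorphism. Therefore $H(x,y)=(\lowPhi(x,y),y)$ is a continuous bijection of the compact Hausdorff space $\TorusD$, so a homeomorphism, and by the previous paragraph it realizes the desired conjugacy.

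For the \textbf{moreover} statement I would differentiate the series
\[
\upPhi(x,y)=\proj_W(x,y)+\sum_{n\ge1}A^{-n}\proj_W G(\hat F^{n-1}(x,y))
\]
term by term in the $y$-directions. The leading term contributes nothing because $\proj_W$ annihilates vertical vectors, and the $n$-th term's $y$-derivative has norm at most $\|A^{-n}\|\,\|dG\|_{C^0}\,\|\partial_y\hat F^{n-1}(x,y)\|$. Assumption (A4) is exactly what controls this: domination says $DF$ expands vectors of $e^\perp$ by a factor strictly less than the factor ($\ge K$) by which it expands vectors of the cone (which contains $W$), and propagating this — through a short induction that distinguishes whether an iterate of a vertical vector has entered the cone or stays on the complementary side — yields a geometric bound $\|\partial_y\hat F^{n}\|\le C\mu^{n}$ with $\mu$ strictly below the $A$-expansion rate. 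Then $\|A^{-n}\|\,\mu^{\,n-1}$ is summable, the differentiated series converges uniformly, and the Uniform Limit Theorem gives continuity of $\partial_y\upPhi$, hence of $\partial_y\lowPhi$ and of $\partial_yH$.

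The main obstacle is precisely this last estimate. Writing $\hat F^n(z)=M^nz+g_n(z)$, the correction $g_n$ is $1$-periodic but not uniformly bounded in $n$, so $D\hat F^n$ differs from $M^n$ by terms that grow with $n$; domination must be invoked to show that, despite this, the $y$-component of $D\hat F^n$ is exponentially dominated by the $A$-expansion — and since $e^\perp$ is not $DF$-invariant, the single-step domination inequality does not simply iterate, so one has to track a vector that can oscillate in and out of the cone. Everything else — the conic-curve estimate, the descent to the torus, the compactness and invariance-of-domain arguments, and the verification that $H$ conjugates $F$ to a map of the form (\ref{eqn:map}) — is routine given the machinery already established.
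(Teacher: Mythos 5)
Your proof of the main statement is essentially the paper's: you use the same map $H(x,y)=(\lowPhi(x,y),y)$, and your injectivity argument (iterate a horizontal conic segment, compare the $K^n$ growth from (\ref{eqn:length_proj}) against the uniform bound (\ref{eqn:ReverseLip_Phi}) forced by $\upPhi\circ\hat F^n=A^n\upPhi$) is exactly the paper's Claim~D restricted to segments parallel to $W$. You differ only in how surjectivity is obtained: the paper invokes the last assertion of Theorem \ref{thm:factor_Phi_matrix} (every fiber of $\upPhi$ meets every $k$-plane parallel to $W$, proved via (\ref{eqn:Phi-phi_bound})), whereas you get it from invariance of domain applied to the injective self-map $\psi_y$ of $\TorusP{k}$. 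Both work; yours is arguably more self-contained, the paper's makes the geometric structure of the fibers (graphs over $\EucP{d-k}$) explicit, which it then reuses.

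The \textbf{moreover} part is where there is a genuine gap. You propose to differentiate the series for $\upPhi$ term by term in the vertical directions, which requires the estimate $\|\partial_y\hat F^{\,n}\|\le C\mu^n$ with $\sum_n\|A^{-n}\|\mu^{n-1}<\infty$. You correctly identify that (A4) controls only a single step of vectors lying in $e^\perp$, and that $e^\perp$ is not $DF$-invariant — but you then assert without proof that ``a short induction'' closes the loop. It does not, as stated: once the iterate of a vertical vector enters the cone $\Cone_\alpha$, assumption (A2) forces it to be expanded by at least $K$ per step from then on, so $\|D\hat F^{\,n}(z)v\|$ for a vertical $v$ can grow at the same rate as the cone expansion, and $\|A^{-n}\|\cdot K^{n}$ need not be summable (nothing in (A1)--(A4) relates $K$ to $\min_i|\lambda_i(A)|$ in the required direction). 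There is no reason the iterates of the coordinate vertical directions at an arbitrary point stay outside the cone — that property holds only for directions tangent to the \emph{fibers} of $\upPhi$. This is precisely why the paper takes a different route in Section \ref{subsect:phi_fiber}: it works with generalized tangent directions to the fiber $\upPhi^{-1}(x_0)$, shows via Lemma \ref{lem:inv_gen_tngnt_vctr} and (A4) that any extra tangent direction would be pushed into the cone under iteration, and derives a contradiction from Claim~D (a fiber cannot have a tangent direction in the cone). That argument establishes $C^1$-smoothness of the fibers (i.e., differentiability of $H^{-1}$ along the $Y$-direction) without ever needing a summable bound on $\|\partial_y\hat F^{\,n}\|$. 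To repair your version you would either have to add a hypothesis tying the domination constant to $\|A^{-1}\|$, or abandon term-by-term differentiation in favor of an argument, like the paper's, that only ever differentiates along the invariant fibers.
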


\begin{corollary}\label{corr:conj_exp_cone} 
Let $M$ be an integer matrix with an integer eigenvalue $m$ with $|m|>1$. Then there is a constant $\delta=\delta(M)>0$ such that if $G:\EucD\to\EucD$ is a $C^1$, $1$-periodic map satisfying $\|G\|_{C^1}<\delta$, then the torus map given by $F(z)=Mz+G(z)\bmod 1$ is conjugate to a map of the form (\ref{eqn:map}). 
\end{corollary}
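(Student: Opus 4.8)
The plan is to reduce the corollary to Theorem \ref{theo:conj_exp_cone} by verifying that assumptions (A1)--(A3) hold once $\|G\|_{C^1}$ is small enough, and that the relevant block $A$ is expanding. First I would produce (A1): since $M$ has the integer eigenvalue $m$ with $|m|>1$, the argument already given in the proof of Corollary \ref{corr:factor_Phi} (the Pick's-formula tiling built from the integer left eigenvector $v_m^L$) supplies a matrix $S\in SL(d,\mathbb{Z})$ with $S^{-1}MS$ block upper-triangular, $k=1$, and top-left block $A\equiv m$. A $1\times 1$ block equal to $m$ with $|m|>1$ is trivially an expanding matrix, so the expanding-matrix hypothesis of Theorem \ref{theo:conj_exp_cone} is met. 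Passing to the $S$-coordinates (which conjugates $F$ to another torus map with the same structure, changing $G$ only by a bounded linear change of variables), I may assume $M$ is already in this form and the relevant direction is $e_1 = W$.

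Next comes (A3), which is easy: $DF(z) = M + DG(z)$, and since $M$ is invertible (it has nonzero determinant, being an integer matrix with a nonzero eigenvalue — or more carefully, we may arrange $M$ invertible; in fact $\det M$ is a nonzero integer as $m\mid\det M$ and $m\neq 0$), the set of invertible matrices is open, so there is $\delta_1>0$ with $DF$ invertible everywhere whenever $\|G\|_{C^1}<\delta_1$. The substantive step is (A2): exhibiting an invariant, expanding cone-structure centered around $e_1$. Here I would take the constant sub-bundle $e(z)\equiv W=\mathrm{span}(e_1)$, and since $e_1$ spans an $M$-invariant line on which $M$ acts by the scalar $m$ with $|m|>1$, the linear map $M$ itself maps a fixed cone $\mathcal{C}_\alpha$ around $W$ strictly inside itself with expansion factor close to $|m|$ (using that the complementary block has finite norm; one fixes $\alpha$ and $K$ with $1<K<|m|$ for the linear model first). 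Then a perturbation estimate: for $\|DG(z)\|$ small, $DF(z)=M+DG(z)$ still satisfies $\mathcal{C}_\alpha(F(z))\subset DF(z)\mathcal{C}_\alpha(z)$ and $|a'|\ge K|a|$ on the cone. This is a standard cone-field robustness lemma; choosing $\delta_2>0$ small enough makes it hold. Setting $\delta=\min(\delta_1,\delta_2)$ and applying Theorem \ref{theo:conj_exp_cone} finishes the proof.

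I expect the main obstacle to be the bookkeeping in the cone estimate for (A2) when $M$ is only block upper-triangular rather than block diagonal: the off-diagonal block can be large (it is whatever the tiling construction produces), so the cone angle $\alpha$ and the expansion constant $K$ must be chosen with care relative to $|m|$ and that off-diagonal norm, and the allowed perturbation size $\delta$ then depends on all of these — hence the dependence $\delta=\delta(M)$ in the statement. A minor secondary point is making sure the change of coordinates by $S\in SL(d,\mathbb{Z})$ is accounted for correctly: it rescales $\|G\|_{C^1}$ by a factor depending only on $\|S\|$ and $\|S^{-1}\|$, which is harmless since those depend only on $M$. None of these steps requires a genuinely new idea beyond what Theorem \ref{theo:conj_exp_cone} already provides; the corollary is essentially the observation that an integer eigenvalue of modulus $>1$ automatically generates, for small $G$, all the hypotheses of that theorem.
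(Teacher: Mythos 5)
Your proposal follows the same route as the paper's own proof --- in fact the paper's argument is considerably terser: it merely observes that (i) the existence of an expanding cone structure is an open condition in the $C^1$-topology, (ii) the winding matrix does not depend on $G$, and (iii) the linear map $z\mapsto Mz$ itself carries an expanding cone structure around the eigendirection, and then concludes. Your version is more complete, in that you explicitly verify (A1) via the tiling from Corollary \ref{corr:factor_Phi} and (A3) via openness of invertibility, neither of which the paper addresses. At the level of strategy there is nothing to change.

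There are, however, two concrete points where your argument (and, for the second one, the paper's as well) does not yet close. First, your justification of (A3) is wrong as stated: from $m\mid\det M$ and $m\neq 0$ you cannot conclude $\det M\neq 0$, since the characteristic polynomial factors as $(x-m)q(x)$ with $q(0)$ possibly zero --- e.g.\ $M=\mathrm{diag}(2,0)$ has integer eigenvalue $2$ but $\det M=0$. Invertibility of $M$ must be assumed or obtained some other way before the ``invertible matrices form an open set'' step applies. Second, you locate the difficulty in the cone estimate in the off-diagonal block $B$ of $S^{-1}MS=\left(\begin{smallmatrix} m & B\\ 0 & C\end{smallmatrix}\right)$, but the genuine obstruction is the complementary diagonal block $C$: writing $v=(a,b)$ with $a\in W$, one gets $a'=ma+Bb$ and $b'=Cb$, so invariance of $\Cone_\alpha$ for the linear model forces roughly $\|C\|+\alpha\|B\|\le |m|$. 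If $M$ has another eigenvalue of modulus at least $|m|$ (e.g.\ $M=\mathrm{diag}(2,5)$ with $m=2$), then no cone centered on the $m$-eigendirection is invariant, and the perturbation argument never gets started. The paper's assertion that the linear map admits a cone structure with $\alpha=\infty$ glosses over exactly this point; the corollary, proved by this route, really requires $m$ to dominate the spectrum of $C$ (or requires $W$ to be enlarged to absorb the larger eigendirections). Apart from these two points, your write-up is a faithful and more detailed rendering of the paper's argument.
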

\begin{proof}
The proof of this corollary starts with two observations,
\\(i)the existence of an expanding cone structure is an open condition in the $C^1$-topology of maps. 
\\(ii) the winding matrix of a torus map is independent of the periodic part $G$ of the map. [See Eq. \ref{eqn:periodic_part}].
Secondly, for the linear torus map given by the winding matrix, namely, $M:\Torus\to\Torus$, there is an expanding cone structure with $K=m$, $\alpha=\infty$, $e^1=v_m$, $e^2=v_1$. Therefore, there is a bound $\delta>0$ such that if $\|G(y)\|_{C^1}<\delta$, then the map still retains the same winding matrix and an expanding cone structure. \qed
\end{proof}

\textbf{Remark.} In particular, if $M$ is symmetric in the above Theorem, then setting $\delta=0.5(|m|-1)$ suffices.

\textbf{Conjugacy results on the torus.} Notice that $F$ in Theorem \ref{theo:conj_exp_cone} is not invertible but an $m$-fold covering map. Our conjugacy theorem therefore involves some conditions different from conjugacy theorems stated about invertible maps in the past. In \cite{EntropyInvar}, the authors prove that two ergodic homeomorphisms of the torus are conjugate via a measurable conjugacy iff they have the same metric entropy. The conclusions of our theorem is similar to a theorem by Ilyashenko and Negut in \cite{SkewProd3}, where they prove a structural stability theorem for invertible step-skew products. Since we consider maps which are not invertible, we have to rely on a different approach to prove conjugacy. Various other numerical invariants of topological conjugacies between linear endomorphisms of the torus have been described in \cite{Conjug3}. They are a ``complete'' system of invariants, i.e., their invariance is necessary and sufficient for topological conjugacy.

\subsection{Proof of Theorem \ref{theo:conj_exp_cone}}\label{sect:conjg_proof}

The conjugacy can be constructed using the map $\lowPhi$ from  Theorem \ref{thm:factor_Phi_matrix}. We begin with a Proposition.
\begin{proposition}\label{prop:phi_fiber_C0}
The fibers of $\lowPhi$, which are the sets $\lowPhi^{-1}(\theta_0)$ for some $\theta_0\in S^1$, are topologically $\TorusP{d-1}$.
\end{proposition}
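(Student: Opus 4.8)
The plan is to exhibit a homeomorphism of $\TorusD$ that converts $\lowPhi$ into the coordinate projection onto the first $k$ coordinates; the fibers of that projection are plainly standard subtori, and since $k=1$ in the Proposition they are copies of $\TorusP{d-1}$ (for general $k$ the same argument yields $\TorusP{d-k}$). I work on the universal cover with the lift $\hat F(z)=Mz+G(z)$ and the lift $\upPhi(z)=\lim_{n}A^{-n}\proj_W\hat F^{n}(z)$ of $\lowPhi$, using throughout: the semiconjugacy $\upPhi\circ\hat F=A\,\upPhi$; the $\mathbb{Z}^{d}$-equivariance $\upPhi(z+\vec m)=\upPhi(z)+\proj_W\vec m$ with $\proj_W\vec m\in\mathbb{Z}^{k}$; the estimates (\ref{eqn:Phi-phi_bound}) and (\ref{eqn:ReverseLip_Phi}); Claim C; and the conic-curve length estimate (\ref{eqn:length_proj}) furnished by (A2). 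Write $\EucD=W^{\perp}\oplus W$ orthogonally, so $W$ is the span of $e_{1},\dots,e_{k}$.

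The first and central step is the geometric claim that \emph{every $k$-plane $w+W$, $w\in W^{\perp}$, meets every fiber $\upPhi^{-1}(x_{0})$ in exactly one point.} Existence is precisely Claim C. For uniqueness, suppose $z_{1}\neq z_{2}$ both lie in $w+W$ with $\upPhi(z_{1})=\upPhi(z_{2})=x_{0}$. The straight segment $[z_{1},z_{2}]$ lies inside $W$, hence, the expanding cone being centered around $e_{1},\dots,e_{k}$ by (A2), it is a conic curve; its $n$-th image $\hat F^{n}([z_{1},z_{2}])$ is a conic curve joining $\hat F^{n}z_{1}$ to $\hat F^{n}z_{2}$ of length at least $K^{n}|z_{1}-z_{2}|$, so (\ref{eqn:length_proj}) gives $|\proj_W(\hat F^{n}z_{1}-\hat F^{n}z_{2})|\geq\tau K^{n}|z_{1}-z_{2}|\to\infty$. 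But $\upPhi(\hat F^{n}z_{i})=A^{n}\upPhi(z_{i})$, so $\hat F^{n}z_{1}$ and $\hat F^{n}z_{2}$ remain in one fiber of $\upPhi$ for every $n$, and then (\ref{eqn:ReverseLip_Phi}) forces $|\proj_W(\hat F^{n}z_{1}-\hat F^{n}z_{2})|\leq\frac{2}{\|A\|-1}\|G\|_{0}$ for all $n$, a contradiction. I expect this to be the main obstacle: it is the one place where (A2) is really used, and it hinges on threading together the cone expansion, the semiconjugacy, and the two quantitative estimates in the right order.

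Granting this, define $\Psi:\EucD\to\EucD$ by $\Psi(z)=\proj_{W^{\perp}}(z)+\upPhi(z)$. It is continuous; by the step above together with Claim C it is a bijection (with $\proj_{W^{\perp}}z$ held fixed, $\upPhi$ runs bijectively over $W$ along the plane $w+W$); and by (\ref{eqn:Phi-phi_bound}) it moves every point by a bounded amount, so it is proper, hence closed, hence a homeomorphism of $\EucD$. From the $\mathbb{Z}^{d}$-equivariance of $\upPhi$ and $\vec m=\proj_{W^{\perp}}\vec m+\proj_W\vec m$ one gets $\Psi(z+\vec m)=\Psi(z)+\vec m$ for all $\vec m\in\mathbb{Z}^{d}$, so $\Psi$ descends to a homeomorphism $\bar\Psi$ of $\TorusD$.

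Finally, the first $k$ coordinates of $\bar\Psi(z\bmod1)$ are $\upPhi(z)\bmod1=\lowPhi(z\bmod1)$; that is, $\lowPhi=\pi\circ\bar\Psi$ where $\pi:\TorusD\to\TorusP{k}$ is the projection onto the first $k$ coordinates. Hence $\lowPhi^{-1}(\theta_{0})=\bar\Psi^{-1}(\pi^{-1}(\theta_{0}))$, and since $\bar\Psi$ is a homeomorphism and $\pi^{-1}(\theta_{0})$ is a standard $(d-k)$-subtorus, $\lowPhi^{-1}(\theta_{0})$ is homeomorphic to $\TorusP{d-k}$, which is $\TorusP{d-1}$ in the stated case $k=1$. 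What remains is routine: that $\Psi$ is proper (immediate from (\ref{eqn:Phi-phi_bound})), that $\Psi$ restricted to each plane $w+W$ is onto $w+W$ (immediate from Claim C), and the standard fact that a proper continuous bijection between locally compact Hausdorff spaces is a homeomorphism.
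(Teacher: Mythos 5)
Your proof is correct, and its geometric core coincides with the paper's: your ``exactly one intersection point of each fiber with each plane $w+W$'' is precisely the paper's Claim C (existence) combined with its Claim D (no two points of a fiber joined by a conic curve, applied to the straight segment in $w+W$, which is conic by (A2)); the expansion-versus-boundedness contradiction you run is word for word the paper's. Where you diverge is in how you convert that local statement into the topology of the fiber. The paper describes $\upPhi^{-1}(x_0)$ as the graph of a continuous map $\EucP{d-k}\to W$ and then argues that the integer translates $\upPhi^{-1}(x_0+\vec n)$ glue under the covering projection into a single $(d-k)$-torus; the continuity of the graph map and the gluing step are left somewhat implicit there. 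You instead build the global straightening map $\Psi(z)=\proj_{W^\perp}z+\upPhi(z)$, check bijectivity fiberwise, use the bound (\ref{eqn:Phi-phi_bound}) for properness, and push $\Psi$ down to a homeomorphism $\bar\Psi$ of $\TorusD$ conjugating $\lowPhi$ to the coordinate projection $\pi$ (you could even skip properness: the descended continuous bijection of the compact Hausdorff torus is automatically a homeomorphism). This buys two things: it makes the passage from ``graph'' to ``torus'' rigorous without any separate continuity argument, and it essentially proves the paper's later Lemma \ref{subsect:phi_as_conj} in passing, since your $\Psi$ is the map $H(z)=(\lowPhi(z),\proj_{W^\perp}z)$ up to the ordering of coordinates. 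The only cosmetic mismatch is that the Proposition is phrased for $k=1$ ($\theta_0\in S^1$, fibers $\TorusP{d-1}$) while you prove the general $\TorusP{d-k}$ statement, which is what the rest of the paper actually uses.
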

\begin{proof}
To prove this, we will first prove that for every $x_0\in\mathbb{R}$, $\upPhi^{-1}(x_0)$ is an $(d-k)$-hypersurface. We will then use the fact that $\lowPhi$ is a factor of $\upPhi$ to prove the claim of the theorem. 

\textbf{Claim D.} Two distinct points in $\upPhi^{-1}(x_0)$ cannot be connected by a conic curve. 

To see this, first assume the contrary. So there are distinct points $z_1, z_2\in\upPhi^{-1}(x_0)$ and $\gamma$ is a conic curve joining $z_1$ and $z_2$. Then for every $n\in\mathbb{N}$, $F^n(\gamma)$ is again a conic curve whose endpoints are $F^n(z_1)$ and $F^n(z_2)$, both lying in the fiber $\upPhi^{-1}(A^n x_0)$ by Eq. \ref{eqn:factoring_phi}. By Ineq. \ref{eqn:ReverseLip_Phi}, we can conclude that $|\proj_W(F^n z_1-F^n z_2)|\leq\frac{2}{\|A\|-1}\|G\|_0$. Let $l$ be the length of $\gamma$. Then the length of $F^n(\gamma)$ is at least $K^n\gamma$. By Ineq. \ref{eqn:length_proj}
\[\frac{2}{\|A\|-1}\|G\|_0 \geq \phi(F^n z_1-F^n z_2)| \geq lk^n\tau.\]
This inequality holds for every integer $n$. But while the left hand side remains bounded, the right hand side diverges to $\infty$ as $n\rightarrow\infty$. This leads to a contradiction, so our assumption of the contrary was false. So Claim D must be true.

\textbf{Claim E.} $\upPhi^{-1}(x_0)$ is an embedded $(d-k)$-dimensional hyper-plane. 

To see this, first note that any straight line parallel to $W$ is a conic curve. Therefore $\upPhi^{-1}(x_0)$ intersect every $k$-hyperplane parallel to $W$, by Theorem \ref{thm:factor_Phi_matrix}. This combined with the above claim implies that $\upPhi^{-1}(x_0)$ intersect every $k$-hyperplane parallel to $W$ at a unique point. Since $\upPhi$ is continuous, $\upPhi^{-1}(x_0)$ is a closed set. The set of $k$-hyperplane parallel to $W$ can be parameterized by $\EucP{d-k}$, therefore $\upPhi^{-1}(x_0)$ is the graph of a continuous map $\EucP{d-k}\mapsto \EucD$. So Claim E is true.

We can now show that $\lowPhi^{-1}(\theta_0)$ is a topological $(d-k)$ torus. Let $x_0\in\mathbb{R}$ be some lift of $\theta_0$ under the projection map $proj$. Then $\lowPhi^{-1}(\theta_0)$ is the image under $proj$ of the sets $\upPhi^{-1}(x_0+\vec n)$, where $\vec n\in\mathbb{Z}^k$ ranges over all integers. Note that by Eq. \ref{eqn:up_phi_periodic}, these hyper-surfaces are translates of each other by integer vectors. Therefore, the images under $proj$ of all the hyper-surfaces $\upPhi^{-1}(x_0+\vec n)$ is a single $(d-k)$ torus. This concludes the proof of the proposition.\qed
\end{proof}

\begin{lemma}\label{subsect:phi_as_conj}
Let $H:\EucD\to\EucD$ be defined as $H(z)=(\lowPhi(z),\proj_{W^\perp} z)$, where $\proj_{W^\perp} z$ is the projection onto the last $d-k$ coordinates. Then $H$ is a homeomorphism and $H\circ F\circ H^{-1}$ is of the form given in Eqn. \ref{eqn:map}.
\end{lemma}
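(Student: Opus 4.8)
The plan is to verify directly that the map $H(z) = (\lowPhi(z), \proj_{W^\perp} z)$ is a continuous bijection of $\TorusD$ with continuous inverse, and then to compute $H \circ F \circ H^{-1}$ and check it has the skew-product form of Eq. \eqref{eqn:map}. The bijectivity is where Proposition \ref{prop:phi_fiber_C0} does the heavy lifting: fix a point $(\theta_0, y_0) \in \TorusP{k} \times \TorusP{d-k}$ in the target. By Proposition \ref{prop:phi_fiber_C0} (more precisely, Claim E together with its torus version), the fiber $\lowPhi^{-1}(\theta_0)$ is the graph of a continuous function over the $y$-coordinates, i.e.\ it meets each $k$-sub-torus parallel to $\mathcal{F}$ in exactly one point. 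Hence there is exactly one $z$ with $\lowPhi(z) = \theta_0$ and $\proj_{W^\perp} z = y_0$, which says precisely that $H$ is a bijection. Continuity of $H$ is immediate since $\lowPhi$ is continuous (Theorem \ref{thm:factor_Phi_matrix}) and $\proj_{W^\perp}$ is linear; continuity of $H^{-1}$ then follows because $H$ is a continuous bijection from the compact space $\TorusD$ to the Hausdorff space $\TorusD$, so it is automatically a homeomorphism. (Alternatively, one writes $H^{-1}(\theta, y) = (g(\theta,y), y)$ where $g$ is the continuous graph function supplied by Claim E, descended to the torus.)

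Next I would compute the conjugated map. Write $w = H(z) = (\lowPhi(z), \proj_{W^\perp} z)$, and denote the two coordinate blocks of $H$ by $H_1 = \lowPhi$ and $H_2 = \proj_{W^\perp}$. Then
\[
H \circ F \circ H^{-1}(w) = \big( \lowPhi(F(H^{-1}w)),\ \proj_{W^\perp} F(H^{-1}w) \big).
\]
For the first block, the semi-conjugacy identity $\lowPhi \circ F = A \lowPhi \bmod 1$ from Theorem \ref{thm:factor_Phi_matrix} gives $\lowPhi(F(H^{-1}w)) = A\,\lowPhi(H^{-1}w) \bmod 1 = A\theta \bmod 1$, since $\lowPhi(H^{-1}w) = \theta$ by construction. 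So the first coordinate of the conjugated map is exactly $\theta \mapsto A\theta \bmod 1$, independent of $y$. The second block is some continuous function $F_y(\theta, y) := \proj_{W^\perp} F(H^{-1}(\theta,y)) \bmod 1$, which is all that Eq. \eqref{eqn:map} requires of that coordinate. Thus $H \circ F \circ H^{-1}(\theta,y) = (A\theta,\ F_y(\theta,y)) \bmod 1$, which is the desired form.

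The main obstacle is making the descent from $\EucD$ to $\TorusD$ fully rigorous: Proposition \ref{prop:phi_fiber_C0} and Claim E are phrased for $\upPhi$ on $\EucD$, and I need the corresponding statement for $\lowPhi$ on $\TorusD$ — namely that $\lowPhi^{-1}(\theta_0)$ meets each $k$-sub-torus parallel to $\mathcal{F}$ in exactly one point. Existence of an intersection is the last assertion of Theorem \ref{thm:factor_Phi_matrix}; uniqueness requires observing that two points of $\TorusD$ lying on the same sub-torus parallel to $\mathcal{F}$ and in the same $\lowPhi$-fiber lift to two points of $\EucD$ that lie on a common straight line parallel to $W$ (a conic curve, since $W$ lies in the expanding cone by (A2)) with the same $\upPhi$-value up to an integer vector in $W$; adjusting one lift by that integer vector of $W$ — which does not change its $\proj_{W^\perp}$ — puts both in a single $\upPhi$-fiber on the same line parallel to $W$, contradicting Claim D (or Claim E's uniqueness). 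One must also check that the graph function $g$ descending $H^{-1}$ is well-defined on the torus, i.e.\ that the integer-translation structure from Eq. \eqref{eqn:up_phi_periodic} is compatible with the quotient; this is routine but should be stated. Once uniqueness-on-the-torus is in hand, the rest is bookkeeping.
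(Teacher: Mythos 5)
Your proposal is correct and follows essentially the same route as the paper: bijectivity of $H$ comes from the fibers of $\lowPhi$ being graphs over the $y$-coordinates meeting each $k$-sub-torus parallel to $\mathcal{F}$ exactly once (Proposition \ref{prop:phi_fiber_C0} plus the onto statement of Theorem \ref{thm:factor_Phi_matrix}), the homeomorphism property comes from compactness of $\TorusD$, and the skew-product form comes from the semi-conjugacy identity applied to the first coordinate block. Your lifting argument for injectivity (adjusting a lift by an integer vector in $W$ and invoking Claim D) is in fact a more careful justification of the step the paper dispatches with the phrase ``uniformly transverse to all lines parallel to $W$.''
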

\begin{proof}
Since $\lowPhi$ is continuous, $H$ is continuous. We will first prove that $H$ is invertible and then show that, in fact it is a homeomorphism. Finally, we will show that $H$ gives the desired conjugacy. Since $\TorusD$ is a compact set and $H$ is continuous, to prove invertibility, it is enough to show that the map is both one-to-one and onto.


\textbf{Onto: } Let $z_0=(x_0,y_0)\in\TorusD$, where $x_0\in \TorusP{k}, y_0\in \TorusP{d-k}$. Since the set $R:=$ \{$x\in\TorusD$ : $\proj_{W^\perp} z=y_0$\} is a $k$-torus parallel to $W$, by Theorem \ref{thm:factor_Phi_matrix}, $\lowPhi^{-1}(x_0)$ is a topological $(d-k)$-torus transverse to $R$ and therefore they intersect at a unique point $z$. Therefore, $H(z)=(x_0,y_0)$. 

\textbf{One-to-one: } Consider any two inverse images $z'=(x',y')$ and $z''=(x'',y'')$ of $(x_0,y_0)$. since $\proj_{W^\perp}(z'-z'')=0\bmod 1$, $z'-z''$ is parallel to $W$. However both $z'$ and $z''$ lie on $\lowPhi^{-1}(x_0)$, which is uniformly transverse to all lines parallel to $W$. This forces $z'=z''$. 

\textbf{Conjugacy: } For any $(x_0,y_0)\in\TorusD$, let $(x_1,y_1):=H^{-1}(x,y)$, $(x_2,y_2):=F(x_1,y_1)$ and $(x_3,y_3):=H(x_2,y_2)$. To show that $H$ is the desired conjugacy have to show that $x_3=mx_0\ (\mod~1)$. Note that $x_3=\lowPhi(x_2,y_2)=\lowPhi\circ F(x_1,y_1)$. By Eqn. \ref{eqn:factoring_phi}, $x_3=m\times \lowPhi(x_1,y_1)$. But since $(x_1,y_1)=H^{-1}(x_0,y_0)$, $\lowPhi(x_1,y_1)$ must be equal to $x_0$. Therefore, $x_3=mx_0\ (\mod~1)$. \qed
\end{proof}

\subsection{The fibers of $\upPhi$}\label{subsect:phi_fiber}

We will now prove that if the dominated, invariant expanding cone condition (A4) is satisfied, then $H$ is differentiable along the fibres $\lowPhi^{-1}(\theta)$, for $\theta\in\TorusP{k}$. It is equivalent to prove that the fibers of $\upPhi$, which are the sets $\upPhi^{-1}(x)$ for $x\in \mathbb{R}^{k}$ are differentiable, embedded $(k-d)$-hyperplanes. Before proving that, we will describe a generalized notion of tangent vectors.

Let $\lambda:(0,1)\rightarrow \mathbb{R}^d$ be a continuous curve. Let $t_0\in(0,1)$ and $z_0=\lambda(t_0)$. For every non-zero vector $v\in\mathbb{R}^d$, let $\hat{v}$ denote the normalized vector $\frac{v}{\|v\|}$, where $\|v\|$ is the Euclidean norm of $v$. A unit vector $\hat v_0$ will be called a \textbf{generalized tangent direction} to $\lambda$ at $z_0$ is there is a sequence $(t_n)_{n\in\mathbb{N}}$ such that $t_n\rightarrow t_0$, and if $v_n$ denotes the vector $\lambda(t_n)-\lambda(t_0)$, then $\hat{v}_n\rightarrow \hat{v}_0$. 

\textbf{Properties of generalized tangent directions}. The following properties of generalized tangent directions follow immediately from their definition.
\begin{enumerate}
\item Since the definition of a generalized tangent direction is a local property, the definition can be extended to continuous curves in manifolds, like $\TorusD$.
\item Every curve has at least one generalized tangent direction at each of its points. This is because, the vectors $\hat{u}_n$ all lie in the unit sphere $S^{d-1}$ of the tangent space at $z_0$. Since this unit circle is compact, for any sequence $t_n\rightarrow t_0$ and $u_n:=\lambda(t_n)-\lambda(t_0)$, the vectors $\hat{u}_n$ will have at least one limit point.
\item Note that an embedded $(d-k)$-manifold is $C^1$ iff there are exactly $(d-k)$ linearly independent generalized tangent directions at each of its points. 
\end{enumerate}
We will need the following lemma to prove our claim.

\begin{lemma}\label{lem:inv_gen_tngnt_vctr}
Let $\lambda:S^1\rightarrow M$ be a continuous curve in a $d$ dimensional manifold $M$. Let $F:M\rightarrow M$ be a local diffeomorphism. Then $DF$ maps each generalized tangent direction of $\lambda$ into a generalized tangent direction of $F(\lambda)$.
\end{lemma}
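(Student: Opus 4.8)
The plan is to reduce the statement, via charts, to an assertion about curves in Euclidean space and then read it off from the first-order Taylor expansion of $F$ at the base point. Before that, I would make sure the notion of a generalized tangent direction really transports to a manifold as claimed: if $\psi$ is a $C^1$ transition map between two charts around $z_0=\lambda(t_0)$, and in the first chart $v_n:=\lambda(t_n)-\lambda(t_0)$ satisfies $\hat v_n\to\hat v_0$, then $\psi(\lambda(t_n))-\psi(\lambda(t_0))=D\psi(z_0)v_n+\rho(v_n)$ with $\|\rho(v_n)\|/\|v_n\|\to 0$, so after normalizing the left-hand side converges to $\widehat{D\psi(z_0)\hat v_0}$. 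Hence the set of generalized tangent directions at $z_0$ is a well-defined subset of the unit sphere of $T_{z_0}M$, transforming by $D\psi(z_0)$ under change of coordinates; in particular it makes sense to apply $DF(z_0)$ to it, which is what the lemma asserts is compatible with passing to $F\circ\lambda$.

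For the lemma itself, I would fix a generalized tangent direction $\hat v_0\in T_{z_0}M$ of $\lambda$ at $z_0=\lambda(t_0)$, witnessed by a sequence $t_n\to t_0$ with $\hat v_n\to\hat v_0$. Since $F$ is a local diffeomorphism, I choose charts around $z_0$ and $F(z_0)$ in which $F$ is represented by a $C^1$ map with invertible derivative $DF(z_0)$, and I identify points in these charts with elements of $\EucD$. By continuity of $\lambda$, $\lambda(t_n)\to z_0$; writing $v_n:=\lambda(t_n)-z_0$ (nonzero for large $n$, as $\hat v_n$ is defined) and $w_n:=F(\lambda(t_n))-F(z_0)$, differentiability of $F$ at $z_0$ gives
\[
w_n = DF(z_0)\,v_n + \rho(v_n), \qquad \|\rho(v_n)\|/\|v_n\| \to 0 .
\]

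Dividing by $\|v_n\|$ yields $w_n/\|v_n\| = DF(z_0)\hat v_n + \rho(v_n)/\|v_n\| \to DF(z_0)\hat v_0$. Here the local-diffeomorphism hypothesis enters decisively: $DF(z_0)$ is invertible and $\hat v_0\ne 0$, so $DF(z_0)\hat v_0$ is a \emph{nonzero} vector, whence $\|w_n\|/\|v_n\|\to\|DF(z_0)\hat v_0\|>0$, so $w_n\ne 0$ for large $n$ and
\[
\hat w_n \;=\; \frac{w_n/\|v_n\|}{\|w_n\|/\|v_n\|} \;\longrightarrow\; \frac{DF(z_0)\hat v_0}{\|DF(z_0)\hat v_0\|} \;=\; \widehat{DF(z_0)\hat v_0}.
\]
Since $w_n = (F\circ\lambda)(t_n)-(F\circ\lambda)(t_0)$ and $t_n\to t_0$, this exhibits $\widehat{DF(z_0)\hat v_0}$ as a generalized tangent direction of $F\circ\lambda$ at $F(z_0)$, along the same sequence $t_n$, which is exactly the claim.

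The argument is essentially routine once the set-up is in place, and I do not expect a genuine obstacle. The one point that must not be skipped is that the conclusion fails without the invertibility of $DF(z_0)$: if $DF(z_0)\hat v_0=0$ there is no image direction to speak of, and the normalized difference quotients $\hat w_n$ need not converge at all. So the only real ``content'' is that a $C^1$ map with invertible derivative respects the limiting-secant structure; the remaining care is bookkeeping — the chart-independence of the definition (handled in the first paragraph) and the harmless passage between $S^1$ and a parameter interval.
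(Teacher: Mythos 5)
Your proof is correct and follows essentially the same route as the paper's: expand $F$ to first order at $z_0$, normalize the secant vectors, and use invertibility of $DF(z_0)$ to guarantee the image direction is nonzero. If anything, your version is the more careful one — the paper's proof loosely writes ``$v_n\to v_0$'' where the honest statement is $\hat v_n\to\hat v_0$ (since $v_n\to 0$), and your explicit division by $\|v_n\|$ and the remark on why invertibility is indispensable tighten exactly that step.
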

\begin{proof}
Let $\hat v$ be a generalized tangent direction to $\lambda$ at a point $z_0=\lambda(t_0)$ for some $t_0\in S^1$. We will prove that $DF(z_0)(v)$ is along a generalized tangent direction to $F(\lambda)$ at $F(z_0)$. By definition, there is a sequence $t_n\rightarrow t_0$ such that if $v_n:=\lambda(t_n)-\lambda(t_0)$, then $\hat{v}_n\rightarrow \hat{v}_0$. Let $z_n$ denote the point $\lambda(t_n)$. So $z_n=z_0+v_n$.

Let $\lambda(r_0)$ correspond to the point $F(z_0)$, and similarly, $\lambda(r_n)=F(\lambda(t_n))$, where $r_0,r_1,r_2,\ldots\in S^1$. Then note that $r_n\rightarrow r_0$. By the definition of the derivative of a function $F$, $\underset{n\rightarrow\infty}{\lim}\frac{\|F(z_0+v_n)-(F(z_0)+Df(z_0)v_n)\|}{\|v_n\|}=0$.
\\Therefore, $F(z_n)=F(z_0+u_n)\rightarrow F(z_0)+DF(z_0)v_n$ or $F(z_n)-F(z_0)\rightarrow DF(z_0)v_n$.
\\Therefore, $\frac{F(z_n)-F(z_0)}{\|F(z_n)-F(z_0)\|}\rightarrow\frac{DF(z_0)v_n}{\|DF(z_0)v_n\|}$.
\\But since $v_n\rightarrow v_0$, we must have that $DF(z_0)v_n\rightarrow DF(z_0)v_0$ or $\frac{DF(z_0)v_n}{\|DF(z_0)v_n\|}\rightarrow\frac{DF(z_0)v_0}{\|DF(z_0)v_0\|}$.
\\Therefore, $\frac{F(z_n)-F(z_0)}{\|F(z_n)-F(z_0)\|}\rightarrow\frac{DF(z_0)v_0}{\|DF(z_0)v_0\|}$.
\\Therefore, $DF(z_0)v_0$ must be a generalized tangent direction to the curve $F(\lambda)$ at the point $F(z_0)$. \qed
\end{proof}

The proof will be by contradiction. So there is some point $z\in \upPhi^{-1}(x_0)$ with $d-k+1$ linearly independent generalized tangent directions \{$v_0,\ldots,v_{d-k}$\}. At least one of these vectors is not orthogonal to $e(z)$, say $v_0$. By Eqn. \ref{eqn:factoring_phi} and Lemma \ref{lem:inv_gen_tngnt_vctr}, \{$DF^n v_0,\ldots,DF^n v_{d-k}$\} are mapped into generalized tangent directions on $\upPhi^{-1}(m^n x_0)$, since $DF$ is a local diffeomorphism, these directions are independent. Because (A4) is satisfied, if $n$ is large enough, then $DF^n v_0\in e(F^n z)$, the cone at $z$. However, it follows from Claim D in the proof of Proposition \ref{prop:phi_fiber_C0} that a fiber of $\upPhi$ cannot have a generalized tangent direction lying inside cone around \{$e_1,\ldots,e_k$\}. This leads to a contradiction and completes the proof. \qed

\bibliographystyle{unsrt}
\bibliography{Conjugacy_bibliography}
\end{document}